\newtheorem{thm}{Theorem}
\newtheorem{cor}[thm]{Corollary}
\newtheorem{prop}[thm]{Proposition}
\newtheorem{lem}[thm]{Lemma}
\theoremstyle{remark}
\theoremstyle{definition}
\renewcommand{\bar}{\overline}
\newcommand{\Sym}{\mathrm{Sym}}
\newcommand{\Id}{\mathrm{Id}}
\newcommand{\A}{{\mathbb{A}}}
\newcommand{\F}{{\mathbb{F}}}
\newcommand{\uX}{{\underline{X}}}
\newcommand{\uY}{{\underline{Y}}}
\newcommand{\uZ}{{\underline{Z}}}
\newcommand{\Gal}{\mathrm{Gal}}
\newcommand{\Spec}{\mathrm{Spec}\;}
\newcommand{\Alt}{{\raise 2pt\hbox{$\scriptstyle\bigwedge$}}}
\newcommand{\e}{\epsilon}
\newcommand{\cd}{\mathrm{cd}}
\begin{document}
\title[Residually finite dimensional algebras and almost identities]
{Residually finite dimensional algebras and polynomial almost identities}

\author{Michael Larsen}
\email{mjlarsen@indiana.edu}
\address{Department of Mathematics\\
    Indiana University \\
    Bloomington, IN 47405\\
    U.S.A.}

\author{Aner Shalev}
\email{shalev@math.huji.ac.il}
\address{Einstein Institute of Mathematics\\
    Hebrew University \\
    Givat Ram, Jerusalem 91904\\
    Israel}

    \subjclass[2010]{Primary 16R99}

\thanks{ML was partially supported by NSF grant DMS-1702152.
AS was partially supported by ISF grant 686/17 and the Vinik Chair of mathematics which he holds.
Both authors were partially supported by BSF grant 2016072.}

\begin{abstract}
Let $A$ be a residually finite dimensional algebra (not necessarily associative) over a field $k$.
Suppose first that $k$ is algebraically closed. We show that if $A$ satisfies a homogeneous almost identity $Q$,
then $A$ has an ideal of finite codimension satisfying the identity $Q$.
Using well known results of Zelmanov, we conclude that, if a residually finite dimensional Lie algebra $L$ over
$k$ is almost $d$-Engel, then $L$ has a nilpotent (resp. locally nilpotent) ideal of finite codimension if
\linebreak
 char $k=0$ (resp. char $k > 0$).

Next, suppose that $k$ is finite (so $A$ is residually finite). We prove that, if $A$ satisfies a homogeneous probabilistic identity $Q$, then $Q$ is a coset identity of $A$. Moreover, if $Q$ is multilinear, then $Q$ is
an identity of some finite index ideal of $A$.

Along the way we show that, if $Q\in k\langle x_1,\ldots,x_n\rangle$ has degree $d$, and $A$ is a finite $k$-algebra such that the probability that $Q(a_1, \ldots , a_n)=0$ (where $a_i \in A$ are randomly chosen) is at least $1-2^{-d}$, then $Q$ is an identity of $A$.
This solves a ring-theoretic analogue of a (still open) group-theoretic problem posed by Dixon,
\end{abstract}

\maketitle

\section{Introduction}

In this paper, we prove three theorems concerning residually finite dimensional algebras $A$ and polynomial identities.
The common theme is that if a (non-commutative) homogeneous polynomial $Q$ in $n$ variables vanishes on a
large enough subset of $A^n$, then it is actually a \emph{coset identity}, that is, it holds identically on
$(a_1+I)\times\cdots\times (a_n+I)$ for some two-sided ideal $I$ of finite codimension in $A$
and some $a_1,\ldots,a_n\in A$. Under some assumptions we obtain stronger conclusions, namely, that $Q$
is an identity of the ideal $I$.

Let $k$ be an algebraically closed field, $V$ a $k$-vector space, possibly of infinite dimension, and $n$ a positive integer.  We recall \cite{LS} that the \emph{codimension} of a subset $X\subset V^n$ is the smallest integer $c$ for which there exists a direct sum decomposition of $k$-vector spaces $V^n = V_1\oplus V_2$, where $V_2$ is finite dimensional, and an algebraic set $X_2$ of codimension $c$ in $V_2$, such that $X\supset V_1\times X_2$.  We say that $X$ is of \emph{infinite codimension} if no such decomposition exists.

Let $A$ be an associative $k$-algebra, possibly non-unital.
Each non-commutative polynomial $Q\in k\langle x_1,\ldots,x_n\rangle$
defines the evaluation map $e_Q \colon A^ n\to A$.
We define $\cd_Q A$ to be the codimension of $e_Q^{-1}(0)$.
We say that $Q$ is an \emph{almost identity} if $\cd_Q A < \infty$.
If $\cd_Q A = 0$, or, equivalently,  $e_Q(A^n) = 0$, we say $Q$ is an \emph{identity} for $A$.

We can likewise consider a Lie (resp. Jordan) algebra $A$ over $k$ and a Lie (resp. Jordan) polynomial $Q$ and define the codimension of the zero set of $Q$ in $A^n$ and an almost identity in the analogous way. In fact the same definition applies for an arbitrary algebra $A$, namely a linear space over $k$ with a bilinear map $A \times A \to A$ as multiplication (possibly but not necessarily satisfying
some extra-conditions). In this case the polynomial $Q$ is an element of a free algebra in the respective category.  Note that an \emph{ideal} of $A$ will always mean a two-sided ideal, and $A$ \emph{residually finite dimensional} means that the intersection of all ideals of $A$ of finite codimension is the zero ideal.

Our first main result is the following:

\begin{thm}
\label{main}
Let $A$ be a residually finite dimensional algebra over $k$ and $Q$ a homogeneous polynomial as above.  Then the following are equivalent:
\begin{enumerate}
\item The polynomial $Q$ is an almost identity for $A$.
\item The polynomial $Q$ is an identity for some ideal $I$ of $A$ of finite codimension.
\end{enumerate}
\end{thm}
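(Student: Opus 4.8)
The implication $(2)\Rightarrow(1)$ is immediate: if $Q$ is an identity of a finite-codimension ideal $I$, choose a finite-dimensional complement $F$ so that $A^n = I^n\oplus F^n$; since $e_Q^{-1}(0)\supseteq I^n$, taking $V_1=I^n$, $V_2=F^n$ and $X_2=\{0\}$ in the definition of codimension gives $\cd_Q A\le n\dim F<\infty$. All the content lies in $(1)\Rightarrow(2)$, which I sketch as follows.

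Assume $\cd_Q A=c<\infty$, so there is a decomposition $A^n=V_1\oplus V_2$ with $V_2$ finite dimensional — say $V_2\subseteq A_0^n$ for a finite-dimensional $A_0\subseteq A$ — and an algebraic set $X_2\subseteq V_2$ of codimension $c$ with $V_1\times X_2\subseteq e_Q^{-1}(0)$. The plan is first to use homogeneity to eliminate $X_2$. Fix $v_1\in V_1$ and $x_2\in X_2$. Since $e_Q(v_1+sx_2)=s^d\,e_Q(s^{-1}v_1+x_2)=0$ for all $s\in k^\times$ and $e_Q(v_1+sx_2)$ is a polynomial in $s$ with values in a finite-dimensional subspace of $A$, it also vanishes at $s=0$; hence $e_Q(v_1)=0$. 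Thus $Q$ vanishes identically on the finite-codimension subspace $V_1$ of $A^n$. Next I would replace $V_1$ by a product subspace: setting
\[
J:=\{a\in A:\ a\varepsilon_i\in V_1\ \text{for}\ i=1,\dots,n\},
\]
where $a\varepsilon_i\in A^n$ has $a$ in the $i$-th coordinate and $0$ elsewhere, each of the $n$ defining conditions has codimension at most $\codim V_1$, so $J$ has finite codimension in $A$ and $J^n\subseteq V_1$. Consequently $e_Q$ vanishes on $J^n$.

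It remains to upgrade the finite-codimension \emph{subspace} $J$ to a finite-codimension \emph{ideal} on which $Q$ is an identity; this is where residual finite-dimensionality is used, and it is the step I expect to be the main obstacle. The idea is to pass to the finite-dimensional quotients $B=A/I$, with $I$ ranging over the finite-codimension ideals of $A$, so that $\bigcap_I I=0$. Choosing $I$ with $\pi_I\colon A\to B$ injective on $A_0$ — possible because $A$ is residually finite dimensional and $A_0$ is finite dimensional — one checks that pushing $V_1\times X_2$ forward through $\pi_I^n$ yields $\cd_Q B\le c$, while pushing $J^n$ forward shows $Q$ vanishes on $\pi_I(J)^n$, with $\pi_I(J)$ of codimension at most $m:=\codim J$ in $B$. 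In $B$ one then seeks an ideal $L_I\trianglelefteq B$ with $e_Q(L_I^n)=0$; its preimage $K_I:=\pi_I^{-1}(L_I)$ is an ideal of $A$ containing $I$ with $e_Q(K_I^n)\subseteq I$. If the $L_I$ can be chosen so that $K_I$ has codimension bounded independently of $I$ and the assignment $I\mapsto K_I$ is monotone in $I$, then $\{K_I\}$ is a downward-directed family of ideals of bounded codimension, hence stabilizes at a finite-codimension ideal $K=\bigcap_I K_I$, and $e_Q(K^n)\subseteq\bigcap_I I=0$, so $Q$ is an identity of $K$ and we are done. The delicate point is exactly the construction of the $L_I$ with these two properties. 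Monotonicity pushes one toward a canonical choice such as a largest $Q$-subideal of $B$, but $Q$-ideals of a finite-dimensional algebra need not be closed under sums. One therefore has to argue more carefully, using the bounds $\cd_Q B\le c$ and $\codim\pi_I(J)\le m$ to control the finitely many constituents of $B$ on which $Q$ can fail to be an identity, and thereby to pin down both the ideal $L_I$ and a uniform bound on its codimension.
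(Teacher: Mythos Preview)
Your treatment of $(2)\Rightarrow(1)$ is fine, and your opening move for $(1)\Rightarrow(2)$ --- using homogeneity to show that $e_Q$ already vanishes on all of $V_1$, and hence on $J^n$ for a finite-codimension linear subspace $J\subseteq A$ --- is correct and pleasant. But, as you yourself flag, the real content is the upgrade from a subspace to an ideal, and here the proposal stops short of a proof. The plan you sketch (pass to finite-dimensional quotients $B=A/I$, locate ideals $L_I\trianglelefteq B$ on which $Q$ vanishes, and intersect their preimages) needs both a uniform bound on $\codim_B L_I$ and a coherence condition as $I$ shrinks. You correctly observe that there is no canonical ``largest $Q$-ideal'', so coherence is not free; and the uniform bound is not free either: in a simple quotient $B$ on which $Q$ is not an identity the only admissible choice is $L_I=0$, so bounding $\codim L_I$ amounts to bounding $\dim B$, which you have not established from $\cd_Q B\le c$ and $\codim\pi_I(J)\le m$ alone. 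This is a genuine gap, not a routine detail, and nothing in the proposal closes it.

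The paper's argument bypasses the subspace-to-ideal problem entirely by never leaving the world of ideals. The key step is a superadditivity inequality (Proposition~\ref{Super}): for any ideal $I$ of finite codimension,
\[
\cd_Q A \;\ge\; \cd_Q I \;+\; \cd_Q(A/I),
\]
proved by a fibre-dimension estimate for the morphism from (the Zariski closure of) $X_2$ to the zero-locus $\uY\subseteq (A/I)^n$. With this in hand, $(1)\Rightarrow(2)$ is a short induction: assuming no finite-codimension ideal satisfies $Q$, start from any such $I$ with $\cd_Q(A/I)\ge i$, pick $\alpha\in I^n$ with $e_Q(\alpha)\ne 0$, use residual finite-dimensionality to find a finite-codimension ideal $J$ with $e_Q(\alpha)\notin J$, and apply superadditivity to $A/(I\cap J)$ and its ideal $I/(I\cap J)$ to get $\cd_Q\bigl(A/(I\cap J)\bigr)\ge i+1$. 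Thus $\cd_Q(A/I)$, hence $\cd_Q A$, is unbounded, contradicting $(1)$. No uniform codimension bound and no compatibility across quotients are needed; the single numerical invariant $\cd_Q$ carries the whole argument.
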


The non-trivial part is that (1) implies (2).  The reverse implication follows from the fact that $I^n$ is of finite codimension in $A^n$.

We now compare the theorem above with the main result of \cite{LS} (see Theorem 1 there and the comments on p. 10).
The latter result shows that an associative/Lie/Jordan algebra with an almost identity $Q$ satisfies some identity $P$
(usually different and more complex than $Q$).
Theorem \ref{main} above holds for \emph{all} algebras $A$, and the identity satisfied by the ideal $I$ is the original almost identity $Q$.

Our second main result concerns the case that $k$ is a finite field $\F_q$ for a prime power $q$.
Any residually finite algebra $A$ over $\F_q$ is a dense subalgebra of its completion $\bar A$, and
as an additive group, $\bar A$ is profinite.  For each $n$, we endow $\bar A^n$ with its Haar measure and consider the condition on a subset $X\subset A^n$ that $\bar X$ has non-zero measure.  We say that $Q$ is a \emph{probabilistic identity} if the closure of ${e_Q^{-1}(0)}$ has positive measure.

\begin{thm}
\label{finite fields}
Let $A$ be a residually finite dimensional algebra over $k=\F_q$ and $Q$ a homogeneous polynomial as above.  Then the following are equivalent:
\begin{enumerate}
\item The polynomial $Q$ is a probabilistic identity of $A$.
\item The polynomial $Q$ is a coset identity of some finite index ideal $I$ of $A$.
\end{enumerate}
Furthermore, if $Q$ is multilinear, then these conditions are equivalent to

{\rm (3)} $Q$ is an identity of some finite index ideal $I$ of $A$.
\end{thm}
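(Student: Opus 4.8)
The plan is to prove the three non-trivial implications $(2)\Rightarrow(1)$, $(2)\Rightarrow(3)$ (the latter assuming $Q$ multilinear), and $(1)\Rightarrow(2)$, which is the heart of the matter; $(3)\Rightarrow(2)$ is immediate on taking $a_i=0$. For $(2)\Rightarrow(1)$: if $Q$ vanishes on $(a_1+I)\times\cdots\times(a_n+I)$, then $e_Q^{-1}(0)$ contains this set, whose closure in $\bar A^n$ is $(a_1+\bar I)\times\cdots\times(a_n+\bar I)$, and since $\bar A/\bar I\cong A/I$ is finite this closure has measure $[A:I]^{-n}>0$; so $Q$ is a probabilistic identity. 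For $(2)\Rightarrow(3)$ with $Q$ multilinear: from the vanishing of $Q$ on $(a_1+I)\times\cdots\times(a_n+I)$, linearity in the $i$-th variable gives $Q(\dots,a_i+c_i,\dots)=Q(\dots,a_i,\dots)+Q(\dots,c_i,\dots)$, and putting $c_i=0$ shows the first summand vanishes on the remaining cosets, hence so does the second; iterating over $i=1,\dots,n$ strips off all the shifts and shows $Q$ vanishes on $I^n$, i.e.\ $Q$ is an identity of the finite-index ideal $I$.

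For $(1)\Rightarrow(2)$, put $d=\deg Q$ (the case $d=0$ being trivial) and let $C=\overline{e_Q^{-1}(0)}\subseteq\bar A^n$, a closed set with $\mu(C)>0$. Multiplication on $\bar A$ is continuous, so $e_Q$ extends to a continuous map $\bar e_Q\colon\bar A^n\to\bar A$ with $C\subseteq\bar e_Q^{-1}(0)$ and $A^n\cap\bar e_Q^{-1}(0)=e_Q^{-1}(0)$. I will then apply the Lebesgue density theorem in the profinite abelian group $\bar A^n$, relative to the neighbourhood basis at $0$ given by the open subgroups $\bar I^n$ with $I$ a finite-index ideal of $A$: since $\mu(C)>0$, some $v\in C$ is a density point, so there is a finite-index ideal $I$ with
\[
\frac{\mu\bigl(C\cap(v+\bar I^n)\bigr)}{\mu(\bar I^n)}>1-2^{-d}.
\]
(When $\dim_k A$ is countable $\bar A$ is metrizable and this is classical; in general every Haar-measurable subset of a profinite abelian group coincides, off a null set, with one pulled back from a metrizable quotient, which reduces to that case.)

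Next I descend to finite quotients. Fix a finite-index ideal $J\subseteq I$, put $B=A/J$ and $\mathfrak j=I/J$, an ideal of $B$, and let $\bar a\in B^n$ be the image of $v$. Since $C\subseteq\bar e_Q^{-1}(0)$, every $w\in C\cap(v+\bar I^n)$ reduces mod $J$ to a point $\bar a+\bar c$ with $\bar c\in\mathfrak j^n$ and $Q(\bar a+\bar c)=0$ in $B$; comparing Haar measure with counting in the displayed inequality yields that $Q(\bar a+\bar c)=0$ for more than a $(1-2^{-d})$-fraction of $\bar c\in\mathfrak j^n$. I then invoke the following affine form of the ring-theoretic Dixon lemma proved along the way: \emph{for a finite $k$-algebra $B$, an ideal $\mathfrak j$, a tuple $\bar a\in B^n$ and a polynomial $Q$ of degree $d$, if $Q(\bar a+\bar c)=0$ for more than a $(1-2^{-d})$-fraction of $\bar c\in\mathfrak j^n$ then $Q(\bar a+\bar c)=0$ for all $\bar c\in\mathfrak j^n$.} This is obtained by the same reduction as the non-affine statement: picking $k$-bases of $B$ and of $\mathfrak j$ turns $\bar c\mapsto Q(\bar a+\bar c)$ into a tuple of ordinary polynomials over $\F_q$ of degree $\le d$, and one is reduced to the elementary fact that a polynomial $p$ over $\F_q$ of degree $\le d$ that vanishes on more than a $(1-2^{-d})$-fraction of its domain vanishes identically --- proved by induction on $d$, since a non-constant such $p$ has a discrete derivative $\Delta_u p(x)=p(x+u)-p(x)$ that is non-zero of degree $\le d-1$, and $\{\Delta_u p\ne0\}\subseteq\{p\ne0\}\cup\bigl(\{p\ne0\}-u\bigr)$ forces $\{p\ne0\}$ to have density at least half that of $\{\Delta_u p\ne0\}$.

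Applying this for every finite-index $J\subseteq I$ gives $Q(\bar a_J+\bar c)=0$ in $A/J$ for all $\bar c\in(I/J)^n$, where $\bar a_J$ is the image of $v$. Choosing $a\in A^n$ with $a\equiv v\pmod{\bar I}$, for each $c\in I^n$ and each $J$ the residues of $a$ and of $v$ in $(A/J)^n$ differ by an element of $(I/J)^n$, which may be absorbed into $\bar c$; hence $Q(a+c)\in J$. Since $A$ is residually finite dimensional, $\bigcap_{J\subseteq I}J=0$, so $Q(a+c)=0$ for all $c\in I^n$: $Q$ is a coset identity of $I$, proving $(2)$. Together with the first paragraph this also gives the equivalence of $(1)$, $(2)$, $(3)$ for multilinear $Q$. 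The one step with genuinely new content is the passage from "positive measure" to "identically zero on a coset": the density theorem only produces a coset on which $Q$ vanishes with high probability, and it is the Dixon-type lemma, whose threshold $1-2^{-d}$ is exactly matched by the value $1$ coming from iterated differentiation, that converts this into exact vanishing; the measure-theoretic technicality in the non-metrizable case, and the bookkeeping in the descent, are routine by comparison.
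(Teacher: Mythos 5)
Your proof is correct, and it takes a genuinely different route to $(1)\Rightarrow(2)$ from the paper's. The paper argues by contraposition without invoking any measure-theoretic machinery: assuming $Q$ is not a coset identity, it shows that for every finite-codimension ideal $I$ one can produce a smaller finite-codimension ideal $J\subseteq I$ with $f(Q,J)\le(1-2^{-d})f(Q,I)$ (by lifting each $\alpha\in Q_I^{-1}(0)$ to a tuple on which $e_Q$ is nonzero and then killing that nonzero value in some further finite quotient, after which the Dixon-type bound applies on every fiber of $(A/J)^n\to(A/I)^n$). Iterating drives $f(Q,I)$ to $0$, so $\overline{e_Q^{-1}(0)}$ has measure zero. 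Your argument instead goes forward: you use the Lebesgue density theorem for the filtration $\{\bar I^n\}$ of $\bar A^n$ to find a single coset $v+\bar I^n$ on which $\overline{e_Q^{-1}(0)}$ has density $>1-2^{-d}$, then push down to each finite quotient $A/J$ and apply the affine Dixon lemma to get exact vanishing there, and finally intersect over $J$ using residual finiteness. Both proofs turn on the same threshold $1-2^{-d}$; the trade-off is that the paper's iteration entirely sidesteps the density theorem (and hence the non-metrizable technicalities you have to dispose of), while your version is arguably more conceptual in that it directly exhibits the coset of $(2)$. A smaller difference: you reprove the $2^{-d}$ bound by a discrete-derivative induction, whereas the paper's Theorem~\ref{bounded degree} inducts on the number of variables and obtains the sharper bound $f_q(d)\ge 2^{-d}$; either suffices here. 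Your treatments of $(2)\Rightarrow(1)$, $(3)\Rightarrow(2)$, and the multilinear $(2)\Rightarrow(3)$ match the paper's.
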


Again, the non-trivial part is the claim that (1) implies (2).

In \cite{D} Dixon asks whether, for every group-word $w \in F_n$ (the free group of rank $n$)
there exists $\e = \e(w) > 0$ such that if $G$ is a finite group, and the word map $w:G^n \to G$
attains the value $1$ with probability $\ge 1-\e$, then $w$ is an identity of $G$.
In spite of some positive results for special words $w$, Dixon's Problem is still very much open.
Here we obtain a general positive solution of an analogous question on finite algebras $A$.
The solution is effective in the sense that $\e$ is given explicitly; in fact, if $d$ is the degree of the ambient polynomial map, then $\e = 2^{-d}$ will do.

\begin{thm}\label{dixon}
Let $A$ be any finite-dimensional algebra over a finite field and $e_Q \colon A^n\to A$ a polynomial map
associated with a polynomial $Q$ of degree $d$ in $n$ variables in the respective category.
If $\frac{|e_Q^{-1}(0)|}{|A|^n} \ge 1 - 2^{-d}$, then $Q$ is identically zero.
\end{thm}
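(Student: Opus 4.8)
The plan is to pass to coordinates, reducing the statement to a sharp lower bound on the number of nonzeros of a nonzero polynomial function over $\F_q$ (a Reed--Muller type estimate), and then to prove that bound by a short induction on the number of variables. First I would fix an $\F_q$-basis $\beta_1,\ldots,\beta_m$ of $A$ and substitute $x_i=\sum_{j=1}^m y_{ij}\beta_j$, introducing $nm$ commuting scalar variables $y_{ij}$ over $\F_q$. Since the multiplication of $A$ is bilinear, every degree-$e$ term of $Q$ (in whatever association pattern the ambient category prescribes) evaluates to a polynomial in the $y_{ij}$ that is homogeneous of degree $e$; hence there are $P_1,\ldots,P_m\in\F_q[y_{ij}]$, each of total degree at most $d$, with $e_Q\bigl(\sum_j y_{1j}\beta_j,\ldots,\sum_j y_{nj}\beta_j\bigr)=\sum_j P_j(y)\beta_j$. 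As $\{\beta_j\}$ is a basis, $e_Q\equiv 0$ if and only if every $P_j$ is the zero function on $\F_q^{nm}$, and $e_Q^{-1}(0)=\bigcap_j P_j^{-1}(0)$, so $|e_Q^{-1}(0)|\le|P_j^{-1}(0)|$ for every $j$.

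Thus it suffices to prove: \emph{if $P\in\F_q[y_1,\ldots,y_N]$ has total degree at most $d$ and is not the zero function on $\F_q^N$, then $|\{y\in\F_q^N:P(y)\ne 0\}|\ge 2^{-d}q^N$}, equivalently $|P^{-1}(0)|\le(1-2^{-d})q^N$. I would prove this by induction on $N$. At the start, replace $P$ by its reduced form, with every exponent in $\{0,1,\ldots,q-1\}$; this represents the same function, does not raise the total degree, and (being reduced) is nonzero as a polynomial. The case $N=0$ is a nonzero constant. For $N\ge 1$ write $P=\sum_{i=0}^{q-1}g_i(y_1,\ldots,y_{N-1})\,y_N^{\,i}$ with each $g_i$ reduced of total degree $\le d-i$, and let $e$ be the largest index with $g_e\ne 0$, so $0\le e\le\min(d,q-1)$. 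For each $y'\in\F_q^{N-1}$ with $g_e(y')\ne 0$, the one-variable polynomial $P(y',y_N)$ has degree exactly $e$, hence at most $e$ roots and at least $q-e$ nonzeros; and by the inductive hypothesis applied to the nonzero reduced polynomial $g_e$ (total degree $\le d-e$, in $N-1$ variables) there are at least $2^{-(d-e)}q^{N-1}$ such $y'$. Hence $P$ is nonzero at at least $2^{-(d-e)}q^{N-1}(q-e)$ points, and $2^{-(d-e)}(q-e)\ge 2^{-d}q$ is equivalent to $2^{e}(q-e)\ge q$, which follows from the constraint $q\ge e+1$ (reducedness) together with the elementary inequality $2^{e}\ge e+1$.

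Granting the lemma, the theorem follows: a nonzero $P_j$ would have at most $(1-2^{-d})|A|^n$ zeros, and since $e_Q^{-1}(0)\subseteq P_j^{-1}(0)$, the hypothesis $|e_Q^{-1}(0)|\ge(1-2^{-d})|A|^n$ forces every $P_j$ to be the zero function, i.e.\ $e_Q\equiv 0$. The delicate point --- and the only real obstacle --- is the reduction to reduced polynomials in the lemma: it is exactly what makes ``nonzero as a function'' coincide with ``nonzero as a polynomial'' (so that the inductive hypothesis can be applied to the coefficient $g_e$), and it supplies the constraint $e\le q-1$ that dovetails with $2^{e}\ge e+1$ to produce the constant $2^{-d}$; without it one gets only a $q$-dependent estimate. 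The constant $2^{-d}$ is best possible --- it is attained over $\F_2$ by $P=y_1\cdots y_d$ --- which also pins down the extremal case. (Alternatively, the lemma is a restatement of the known minimum-distance formula for generalized Reed--Muller codes, whose relative value $(1-b/q)q^{-a}$, where $d=a(q-1)+b$ with $0\le b\le q-2$, is $\ge 2^{-d}$ by a comparable elementary estimate; the self-contained induction above is shorter.)
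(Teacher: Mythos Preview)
Your proof is correct and follows essentially the same route as the paper: reduce $e_Q$ to a scalar polynomial of degree $\le d$ (you project to a basis coordinate, the paper composes with a linear functional --- these are equivalent), then prove the Reed--Muller type lower bound by reducing exponents below $q$ and inducting via the leading coefficient in one variable. The only substantive difference is that the paper first establishes the sharp bound $f_q(d)=(q-r)/q^{m+1}$ (with $d=m(q-1)+r$, $0\le r\le q-2$) and then checks $f_q(d)\ge 2^{-d}$, whereas you go directly for $2^{-d}$ via the inequality $2^e(q-e)\ge q$; your argument is thus more streamlined but gives slightly less information.
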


For $d \ge 2$ consider the degree $d$ Engel polynomial $E_{d-1} := [x,y, \ldots , y]$, a left-normed Lie product where $y$ appears $d-1$ times, as an element of the free Lie algebra on $x,y$ over the underlying field $k$.
By \cite[2.1]{MM}, if $L$ is a finite Lie algebra in which the Engel condition $E_{d-1}=0$ holds with probability
greater than $1-2^{-d}$, then $E_{d-1}$ is an identity of $L$. Theorem \ref{dixon} above extends this for any
finite algebra and any polynomial.

\section{Algebras over algebraically closed fields}

We recall  that every subset $S$ of $k^m = \A^m(k)$ defines the ideal $Z(S)$ of elements in
$k[x_1,\ldots,x_m]$ which vanish on $S$ and every ideal $I\subset k[x_1,\ldots,x_m]$
defines the algebraic set $V(I)$ of common zeroes of $I$.  By Hilbert's Nullstellensatz, these two maps
give a bijection between algebraic sets in $k^n$ and radical ideals $I$.
Radical ideals $I$ are in bijective correspondence with reduced closed $k$-subschemes
$\Spec k[x_1,\ldots,x_m]/I$.  The bijection between algebraic sets and reduced closed $k$-subschemes
is given in one direction by taking the Zariski closure with its reduced closed subscheme structure and in the other by taking $k$-points.

Each linear transformation $T\colon V\to W$ of finite dimensional $k$-vector spaces defines a homomorphism of commutative graded $k$-algebras $T^*\colon \Sym^* W^*\to \Sym^* V^*$.  If $S_V\subset V$,
$S_T\subset W$, and $T(S_V)\subset S_W$, then $T^*(Z(S_W))\subset Z(S_V)$.
Thus, $T$ determines a morphism of affine $k$-schemes
$$\Spec \Sym^* V^*/Z(S_V)\to \Spec \Sym^* W^*/Z(S_W)$$
which, at the level of $k$-points, gives the restriction of $T$.

We now prove the key proposition:

\begin{prop}
\label{Super}
If $A$ is an arbitrary algebra over $k$,
$I$ is an ideal of $A$ of finite codimension,
and $Q$ is a homogeneous polynomial as above,
then
$$\cd_Q A \ge \cd_Q I + \cd_Q A/I.$$
\end{prop}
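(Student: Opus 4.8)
The plan is to reduce the inequality to a statement about codimensions of preimages under a linear surjection, and then to use the multiplicativity-type behavior of codimension recorded in \cite{LS}. Write $\pi\colon A\to A/I$ for the quotient map and fix a vector space splitting $A = I\oplus C$ with $C$ finite dimensional, so that $\pi$ restricts to an isomorphism $C\xrightarrow{\sim} A/I$. Applying $\pi$ coordinatewise gives $\pi^n\colon A^n\to (A/I)^n$, and since $Q$ is a polynomial in the algebra operations, $\pi^n$ carries $e_Q^{-1}(0)\subset A^n$ into $e_{Q,A/I}^{-1}(0)\subset (A/I)^n$; that is, $e_Q^{-1}(0) \subset (\pi^n)^{-1}\bigl(e_{Q,A/I}^{-1}(0)\bigr)$. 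So it suffices to bound $\cd_Q A$ below by $\cd_Q I$ plus the codimension of the preimage of the zero set downstairs.

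The key point is that the zero set upstairs fibers over the zero set downstairs. First I would show that the generic fiber of $\pi^n$ restricted to $e_Q^{-1}(0)$ still contains a translate of $e_Q^{-1}(0)\cap I^n$. Concretely: fix $c = (c_1,\dots,c_n)\in C^n$ with $\pi^n(c)$ in $e_{Q,A/I}^{-1}(0)$, i.e.\ $Q(c_1,\dots,c_n)\in I$. Then for $b\in I^n$ we have $Q(c+b) \equiv Q(c) \pmod I$ automatically, but to get a genuine statement we should instead work over the dense locus: the honest claim is that the set of $c\in C^n$ with $Q(c)\in I$ and such that $e_Q^{-1}(0)\cap (c + I^n)$ has codimension exactly $\cd_Q I$ inside $c+I^n$ is "large", and that on this locus the fibers of $e_Q^{-1}(0)\to e_{Q,A/I}^{-1}(0)$ all have codimension $\ge \cd_Q I$. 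Granting a fibration statement of this shape, one adds codimensions: the codimension of a set that surjects onto a base of codimension $\cd_Q(A/I)$ with all fibers of codimension $\ge \cd_Q I$ is at least the sum. This additivity along fibrations is exactly the sort of bookkeeping with the direct-sum decompositions $V^n = V_1\oplus V_2$ in the definition of codimension from \cite{LS}, and I would invoke (or re-derive) it there.

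To make the fiber bound rigorous I would split off the $I$-part and the $C$-part of the decomposition $A^n = I^n \oplus C^n$ compatibly with the decomposition $A^n = V_1\oplus V_2$ witnessing $\cd_Q A$; this is where the finite-dimensionality of $C$ (hence of $A/I$) is used, to ensure the "finite-dimensional part" $V_2$ can be chosen to contain $C^n$. Once $C^n\subset V_2$, the map $e_Q^{-1}(0)\to (A/I)^n$ factors through a morphism of the relevant finite-dimensional affine schemes, its image lands in the scheme cut out by the image of $Q$ — whose codimension is $\cd_Q(A/I)$ by definition — and the fiber over a generic point of that image is, up to the translate by $c$, the zero scheme of $Q$ on $I^n$, of codimension $\cd_Q I$. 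The dimension-of-fibers inequality for morphisms of affine schemes (generic fiber dimension $=$ source dimension $-$ image dimension, and semicontinuity to control all fibers) then yields $\codim e_Q^{-1}(0) \ge \cd_Q I + \cd_Q(A/I)$ in $V_2$, and adjoining $V_1$ changes nothing.

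The main obstacle I anticipate is the passage from "generic fiber" to a clean statement about the codimension in the sense of \cite{LS}: the zero set $e_Q^{-1}(0)$ need not be irreducible or even equidimensional, the map to $(A/I)^n$ need not be flat, and the codimension here is an infimum over decompositions rather than a scheme-theoretic invariant, so I must be careful that the decomposition chosen to compute $\cd_Q A$ can be refined to be simultaneously adapted to $I^n$, to $C^n$, and to the fibration. I expect this to go through by first choosing the optimal decomposition for $A$, then enlarging its finite-dimensional summand to absorb a complement of $I^n$, checking that the codimension of the witnessing algebraic set does not increase under this enlargement, and only then running the fiber-dimension argument inside that enlarged finite-dimensional space.
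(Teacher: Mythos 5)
Your preliminary reduction — enlarging the finite‑dimensional summand $V_2$ to absorb a complement of $I^n$ so that $V_1\subset I^n$, and checking the codimension of the witnessing algebraic set doesn't drop — matches the paper exactly, and your overall strategy (bound $\cd_Q A$ from below by a dimension count along the projection $A^n\to (A/I)^n$) is the right one. The gap is in the fiber bound, and it is a real one. You propose to argue via \emph{generic} fibers, asserting that over a "large" set of $c\in C^n$ with $Q(c)\in I$ the fiber $e_Q^{-1}(0)\cap (c+I^n)$ is, up to translation, the zero scheme of $Q$ on $I^n$ and hence has codimension $\ge\cd_Q I$. That assertion is false: for $b\in I^n$, the function $b\mapsto Q(c+b)$ is a different polynomial from $b\mapsto Q(b)$ (they agree only modulo $I$, not in $A$), so its zero set in $I^n$ bears no a priori relation to $e_Q^{-1}(0)\cap I^n$ and can have arbitrary codimension. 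Nothing in your setup controls these generic fibers, and the semicontinuity you invoke points the wrong way (generic fibers are the \emph{small} ones; the inequality $\dim X\le\dim Y+\dim(\text{fiber})$ that you need holds at a chosen point, not generically).

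The missing idea is to use the fiber over the origin, which is where the hypothesis that $Q$ is \emph{homogeneous} enters. Since $Q$ is homogeneous, $e_Q(0)=0$, so $0$ lies in $X_2$ (and in $Y$), and the fiber $\uZ$ of $\uX_2\to\uY$ over $0$ has $k$-points $X_2\cap (I^n/V_1)$; together with $V_1\subset I^n$ this is an algebraic subset of $e_Q^{-1}(0)\cap I^n$ that genuinely witnesses $\cd_Q I\le\dim(I^n/V_1)-\dim\uZ$. One then applies the \emph{local} fiber-dimension inequality at the point $0$ (Stacks tag 02JS), $\dim_0\uX_2\le\dim_0\uY+\dim_0\uZ$, rather than any generic-fiber statement, and the arithmetic $\cd_Q A = \dim A^n/I^n + \dim I^n/V_1 - \dim\uX_2 \ge \cd_Q(A/I)+\cd_Q I$ follows. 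So the route is the same as yours up to the choice of fiber; replacing "generic fiber" by "fiber over $0$" both closes the gap and explains why homogeneity is a hypothesis of the proposition.
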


\begin{proof}
There is something to check only if $Q$ is an almost identity for $A$.
Let $V_1\oplus V_2$ be a direct sum decomposition of $A^n$ such that $e_Q^{-1}(0)$
contains $V_1\times X_2$ for some algebraic set $X_2$ of codimension $\cd_Q A$ in the finite-dimensional space $V_2$.  Let $V'_1 = V_1 \cap I^n$, and let $V^{\prime\prime}_1$ denote a complementary subspace to $V'_1$ in $V_1$.  Let $V'_2 = V_2 \oplus V^{\prime\prime}_1$
and $X'_2 = X_2 \times V^{\prime\prime}_1$.  Then $X'_2$ is of codimension $\cd_Q A$ in $V'_2$, and $V'_1\times X'_2 = V_1\times X_2\subset e_Q^{-1}(0)$.  Replacing $V_1,V_2,X_2$
by $V'_1,V'_2,X'_2$, we may therefore assume that $V_1 \subset I^n$.

We identify $V_2$ with the $k$-points of the variety $\A^{\dim V_2}$.  Let $\uX_2$ denote the Zariski closure of $X_2$ in $\A^{\dim V_2}$, so we can identify $X_2$ with $\uX_2(k)$, and
$\dim \uX_2 = \dim V_2 - \cd_Q A$.
We identify $(A/I)^n$ with the $k$-points of $\A^{n\dim A/I}$
and denote by $\uY$ the Zariski closure of the algebraic set
$$Y = \{(\bar a_1,\ldots,\bar a_n)\in (A/I)^n\mid e_Q(\bar a_1,\ldots,\bar a_n) = 0\},$$
so the algebraic set is identified with $\uY(k)$.

The projection map $A^n/V_1\to A^n/I^n$ maps the algebraic set $X_2$ to the algebraic set $Y$,
and it follows that the associated projection morphism $\A^{\dim V_2} \to \A^{n \dim A/I}$
defines a morphism $\pi\colon \uX_2\to\uY$.  As $0\in \uX_2(k)=X_2$ maps by $\pi$ to  $0\in \uY(k) = Y$, the fiber $\uZ$ of $\pi$ over $0$ is non-empty.
Now, $\uZ(k)$ is $X_2\cap (I^n/V_1)$, so $\cd_Q I \le \dim (I^n/V_1) - \dim \uZ$.
By \cite[Tag 02JS]{Stacks},
$$\dim \uX_2 \le \dim \uY + \dim \uZ.$$
Thus,
\begin{align*}
\cd_Q A &= \dim A^n/V_1 - \dim \uX_2 = \dim A^n/I^n + \dim I^n/V_1 - \dim \uX_2 \\
              &\ge \dim A^n/I^n - \dim \uY + \dim I^n/V_1 - \dim \uZ \ge \cd_Q (A/I) + \cd_Q I.
\end{align*}
\end{proof}

We can now prove Theorem~\ref{main}.

\begin{proof}
It suffices to prove that, if condition (2) does not hold, then for all $i\ge 0$ there exists an ideal $I$ of finite codimension in $A$
such that $\cd_Q A/I \ge i$.  We proceed by induction on $i$, the statement being trivial for $i=0$. If
the induction hypothesis holds,  as $Q$ is not an identity for $I$, there exists $\alpha\in I^n$ with $e_Q(\alpha)\neq 0$.  As $A$ is residually finite dimensional, there exists an ideal $J$ of $A$ of finite codimension such that $e_Q(\alpha)\not\in J$.  If $\bar\alpha$ denotes the image of $\alpha$
in $(I/I\cap J)^n$, then $e_Q(\bar\alpha)\neq 0$, so $Q$ is not an identity for $I/I\cap J$.  By Proposition~\ref{Super},
$$\cd_Q A/(I\cap J) \ge \cd_Q A/I + \cd_Q I/(I\cap J) \ge i+1,$$
and the theorem follows by induction.
\end{proof}

We now discuss some consequences of the theorem.

\begin{cor}
Let $k$ be a field of characteristic $0$, $A$ a finitely generated algebra over $k$,
and $Q$ a homogeneous polynomial in $n$ variables defined over $k$.  If $V$ is a subspace of $A^n$ of finite
codimension, $\vec a = (a_1,\ldots,a_n)\in A^n$, and $e_Q(\vec a + V) = 0$, then
$Q$ is an identity on an ideal of $A$ of finite codimension.
\end{cor}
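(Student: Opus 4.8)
The plan is to derive this from Theorem~\ref{main}. First note that a coset identity on a subspace of finite codimension is automatically an almost identity: choose a complement $U$ with $A^n = V\oplus U$, so $\dim_k U<\infty$, and write $\vec a = \vec a_V + \vec a_U$ accordingly. Then $\vec a + V = \vec a_U + V$, which under the identification $A^n\cong V\times U$ is exactly $V\times\{\vec a_U\}$; since this set lies in $e_Q^{-1}(0)$ and $\{\vec a_U\}$ is an algebraic set of codimension $\dim U$ in $U$, we obtain $\cd_Q A \le \dim U < \infty$. (This step uses neither the homogeneity of $Q$ nor the hypothesis on $k$.)

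Theorem~\ref{main} also requires the field to be algebraically closed and the algebra to be residually finite dimensional, and securing the latter is the crux. For the former, replace $A$ by $A' = A\otimes_k\bar k$, still finitely generated, now over $\bar k$. The coset identity survives scalar extension: for $v_1,\dots,v_r\in V$ the element $e_Q\bigl(\vec a + \textstyle\sum_i t_i v_i\bigr)$ is a polynomial in the $t_i$ with coefficients in $A$ that vanishes for all $t_i\in k$, hence identically since $k\supseteq\Q$ is infinite, hence for all $t_i\in\bar k$; thus $e_Q$ vanishes on the coset $(\vec a\otimes 1) + (V\otimes_k\bar k)$ of a finite-codimension subspace of $(A')^n$, and the first paragraph gives $\cd_Q A' < \infty$.

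To see that $A'$ is residually finite dimensional, combine the main result of \cite{LS} --- which turns the almost identity $Q$ for $A'$ into an honest polynomial identity $P$ of $A'$, in the relevant category --- with the structural fact that a finitely generated algebra over a field of characteristic zero satisfying a polynomial identity is residually finite dimensional. In the associative case the latter follows from representability of affine PI-algebras: $A'$ embeds into $M_N(C)$ for an affine commutative $\bar k$-algebra $C$, such $C$ are residually finite dimensional, and the property passes to subalgebras because $A'/(A'\cap I)\hookrightarrow M_N(C)/I$ is finite dimensional; the Lie case rests on Zelmanov-type results, and it is here that $\mathrm{char}\,k=0$ is used. Now Theorem~\ref{main} applies to $A'$ and yields an ideal $I'$ of finite codimension in $A'$ with $e_Q\bigl((I')^n\bigr)=0$. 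Finally $I := A\cap I'$ is an ideal of $A$ of finite codimension, since $A/I\hookrightarrow A'/I'$, and $e_Q(I^n)\subseteq e_Q\bigl((I')^n\bigr)=0$, which is the assertion. The main obstacle is the structural PI input of the last paragraph; everything else is formal.
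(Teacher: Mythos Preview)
Your overall strategy---extend scalars to $\bar k$, apply Theorem~\ref{main} there, then descend---is the paper's. Two points deserve comment.

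First, a difference in approach: to obtain residual finite-dimensionality of $A' = A\otimes_k\bar k$, the paper simply asserts it as a consequence of $A'$ being finitely generated over $\bar k$, whereas you take the longer route through \cite{LS} (almost identity $\Rightarrow$ PI) and then invoke PI structure theory. Your route is only sketched, and the appeal to ``Zelmanov-type results'' in the Lie case is not a proof; but this is a divergence of method rather than an error.

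Second, a genuine gap: your descent step sets $I = A\cap I'$ and claims finite codimension ``since $A/I\hookrightarrow A'/I'$''. But $A'/I'$ is finite-dimensional over $\bar k$, not over $k$, so the embedding alone does not bound $\dim_k(A/I)$; indeed $\bar k$ itself sits inside any nonzero $A'/I'$ and has infinite $k$-dimension. This is exactly the issue on which the paper's proof spends most of its effort: it realizes $A'/I'$ as $B\otimes_K\bar k$ for a finite extension $K/k$ and a finite-dimensional $K$-algebra $B$, passes to a finite Galois extension $L/k$ over which the images of the generators of $A$ are defined, and then uses Galois descent for vector spaces to conclude that the kernel $W$ of the resulting map $A\to B\otimes_K L$ (which coincides with your $A\cap I'$) has $\dim_k(A/W)<\infty$. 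Your argument can be repaired along the same lines---the images of the finitely many generators of $A$ in $A'/I'$, together with the structure constants relative to a $\bar k$-basis, involve only finitely many elements of $\bar k$, all algebraic over $k$, so the image of $A$ lies in a finite-dimensional $k$-subspace---but as written the step is incomplete.
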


\begin{proof}
As $A\otimes_k \bar k$ is finitely generated over $\bar k$, it is residually finite-dimensional.
As $k$ is infinite, if $Q$ is an identity on $\vec a+V$, then it is an identity on $\vec a+V\otimes_k \bar k$,
which is a subset of finite codimension in $A\otimes_k \bar k$.  Therefore, there exists an ideal $I$ of
finite codimension in $A\otimes_k \bar k$ such that $Q$ is an identity on $I$.

Let $\{a_1,\ldots,a_m\}$ be a generating subset for $A$ over $k$.  Let $\bar a_i$ denote
the image of $a_i\otimes 1$ under the quotient homomorphism
$$A\otimes_k \bar k\to (A\otimes_k \bar k)/I.$$
The $\bar k$-algebra $(A\otimes_k \bar k)/I$ is finite-dimensional, so fixing any basis, the structure
constants lie in a finite extension $K$ of $k$.  We fix a finite-dimensional $K$-algebra $B$
and an isomorphism of $K$-algebras $\iota\colon B\otimes_K \bar k\to (A\otimes_k \bar k)/I$.  There exists a finite extension $L/K$ such that $\iota^{-1}(\bar a_i)\in B\otimes_k L$ for all $i$.
Enlarging $L$, we may assume $L/K$ is Galois.  There is a unique $L$-algebra homomorphism
$\phi\colon A\otimes_k L\to B\otimes_K L$ such that the diagram
$$\xymatrix{A\otimes_k L\ar[d]_\phi\ar[r]&A\otimes_k \bar k\ar[r] &(A\otimes_k \bar k)/I \\
B\otimes_K L\ar[rr]&&B\otimes_K \bar k\ar[u]_\iota}$$
commutes.

We consider the $L$-algebra homomorphism
$$A\otimes_k L \to \bigoplus_{\sigma\in \Gal(L/k)} B\otimes_K L$$
which in the $\sigma$-coordinate is given by $(\Id_B\otimes \sigma)\circ \phi$.  The kernel is
invariant under the action of $\Gal(L/k)$ on $A\otimes_k L$, so
by Galois descent for vector spaces \cite[Chap.~5,~\S10~Prop.~6]{B} it is of the form
$W\otimes_k L$, where $W$ is the kernel of the composition of $A\to A\otimes_k L$
and $\phi$.  Thus $W$ is a finite codimension ideal of $A$.

\end{proof}

\begin{cor}
\label{nagata}
Let $A$ be a residually finite dimensional associative algebra over $k$. Let $d \ge 1$ and suppose
$x^d$ is an almost identity for $A$. If $k$ has characteristic $p>0$ suppose also $p > d$.
Then $A$ has an ideal $I$ of finite codimension satisfying $I^{f(d)}=0$, where $f(d)$ is a suitable
function of $d$.
\end{cor}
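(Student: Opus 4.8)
The plan is to reduce the statement immediately to the Nagata--Higman theorem by way of Theorem~\ref{main}. Since $x^d$ is a homogeneous polynomial (of degree $d$) in one variable and, by hypothesis, an almost identity for the residually finite dimensional algebra $A$, Theorem~\ref{main} produces an ideal $I$ of $A$ of finite codimension for which $x^d$ is an identity. Concretely this means $a^d = 0$ for every $a \in I$, so $I$ is an associative (a priori non-unital) algebra that is nil of bounded index $d$.

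Next I would invoke the Nagata--Higman theorem: a not-necessarily-unital associative algebra over a field of characteristic $0$, or of characteristic $p$ with $p > d$, which satisfies the identity $x^d = 0$ is nilpotent; more precisely one gets $I^{f(d)} = 0$ with $f(d) = 2^d - 1$. The restriction on the characteristic is exactly what makes the standard argument go through, since that argument only inverts integers at most $d$; the refined bounds of Razmyslov and Kuzmin would yield an $f(d)$ of polynomial size, but any bound is enough for the corollary as stated. Applying this to the ideal $I$ obtained in the first step gives the desired finite codimension ideal with $I^{f(d)} = 0$.

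There is essentially no real obstacle here: the corollary is simply the composition of Theorem~\ref{main} with a classical theorem, and the characteristic hypothesis appearing in the statement is present precisely to license the appeal to Nagata--Higman. The one point worth keeping in mind is that $I$ need not be unital, but this is the natural setting of the Nagata--Higman theorem (a unital algebra satisfying $x^d=0$ is already the zero algebra), so nothing is lost in passing to it.
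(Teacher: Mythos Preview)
Your proposal is correct and matches the paper's own proof essentially line for line: apply Theorem~\ref{main} to obtain a finite-codimension ideal $I$ satisfying $x^d=0$, then invoke Nagata--Higman (in its non-unital form, under the stated characteristic hypothesis) to conclude $I^{f(d)}=0$. Your additional remarks on explicit bounds and the non-unital setting are accurate but go slightly beyond what the paper records.
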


\begin{proof}
By Theorem \ref{main}, $A$ has an ideal $I$ of finite codimension satisfying the identity $x^d=0$.

The well known Nagata-Higman Theorem applied for the associative (non-unital) $k$-algebra $I$
shows that $I^{f(d)}=0$. See for instance \cite[Chapter 6]{DF} for the theorem and for explicit
bounds on the function $f$.
\end{proof}

Our next result describes almost $d$-Engel Lie-algebras.

\begin{thm}
\label{engel}
Let $L$ be a residually finite dimensional Lie algebra over $k$. Let $d \ge 1$ and suppose
The Engel polynomial $E_d$ is an almost identity for $L$. Then
\begin{enumerate}
\item If $k$ has characteristic zero then $L$ has a nilpotent ideal of finite codimension.
\item If $k$ has positive characteristic then $L$ has a locally nilpotent ideal of finite codimension.
\end{enumerate}
\end{thm}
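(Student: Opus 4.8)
The plan is to reduce Theorem~\ref{engel} to Theorem~\ref{main} combined with Zelmanov's celebrated solution of the restricted Burnside problem for Lie algebras. First I would apply Theorem~\ref{main} to the residually finite dimensional Lie algebra $L$ and the homogeneous Lie polynomial $Q = E_d$: since $E_d$ is assumed to be an almost identity for $L$, there exists an ideal $I$ of finite codimension in $L$ on which $E_d$ vanishes identically, i.e.\ $I$ is a $d$-Engel Lie algebra over $k$.

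The remaining work is purely about $d$-Engel Lie algebras, and here I would invoke Zelmanov's theorems. In characteristic zero, a Lie algebra satisfying the Engel identity $E_d = 0$ is nilpotent (of class bounded in terms of $d$ only); this is Zelmanov's positive solution of the (Lie-theoretic) Burnside-type problem in characteristic $0$. Applying this to $I$ gives that $I$ is nilpotent, proving part (1). In positive characteristic the same global nilpotency fails, but Zelmanov's theorem on Engel Lie algebras still yields that a $d$-Engel Lie algebra is \emph{locally nilpotent}: every finitely generated subalgebra is nilpotent. Applying this to $I$ gives part (2). Thus both cases follow once $I$ is produced.

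The main obstacle — or rather, the only nontrivial input beyond Theorem~\ref{main} — is the appeal to Zelmanov's results, which are deep but are exactly ``well known results of Zelmanov'' as advertised in the abstract; I would simply cite them (e.g.\ Zelmanov's work on Engel Lie algebras and the restricted Burnside problem) rather than reprove anything. One small point to check is that these theorems apply over an arbitrary field $k$ of the stated characteristic, not merely over $\bar k$ or over prime fields; this is indeed the case, since local nilpotency and nilpotency class are insensitive to field extension and Zelmanov's arguments are characteristic-uniform. A second routine point is that ``ideal of finite codimension'' is preserved: $I$ as produced by Theorem~\ref{main} has finite codimension in $L$ by construction, so no extra argument is needed there. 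Hence the proof is short: produce $I$ via Theorem~\ref{main}, then quote Zelmanov.

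\begin{proof}
Apply Theorem~\ref{main} to the Lie polynomial $Q = E_d$: since $E_d$ is an almost identity for the residually finite dimensional Lie algebra $L$, there is an ideal $I$ of finite codimension in $L$ satisfying the identity $E_d = 0$; that is, $I$ is a $d$-Engel Lie algebra over $k$. If $\mathrm{char}\,k = 0$, then by Zelmanov's theorem a $d$-Engel Lie algebra is nilpotent (of class bounded in terms of $d$), so $I$ is a nilpotent ideal of finite codimension, proving (1). If $\mathrm{char}\,k > 0$, then by Zelmanov's theorem on Engel Lie algebras a $d$-Engel Lie algebra is locally nilpotent, so $I$ is a locally nilpotent ideal of finite codimension, proving (2).
\end{proof}
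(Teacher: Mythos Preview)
Your proposal is correct and matches the paper's proof essentially verbatim: apply Theorem~\ref{main} to obtain a finite-codimension ideal $I$ satisfying $E_d=0$, then cite Zelmanov's theorems on (local) nilpotency of $d$-Engel Lie algebras in characteristic zero and positive characteristic respectively. Your additional remarks about field-extension insensitivity are unnecessary here since $k$ is already assumed algebraically closed in this section, but they do no harm.
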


\begin{proof}
By Theorem \ref{main}, $L$ has an ideal $I$ of finite codimension satisfying the identity $E_d=0$.
The conclusion now follows from well known theorems of Zelmanov on the nilpotency of $d$-Engel Lie algebras in
characteristic zero \cite{Z0} and the local nilpotency of $d$-Engel Lie algebras in positive characteristic
\cite{Z1, Z2}.
\end{proof}

\section{Algebras over finite fields}

From now on, we assume $k=\F_q$.
For every positive integer $d$, we write $d = m(q-1)+r$, where $0\le r \le q-2$, and
define
$$f_q(d) = \frac{q-r}{q^{m+1}}.$$

\begin{lem}
\label{estimate}
We have
$$f_q(d) = \min\prod_{i=1}^\infty \frac{q-x_i}q,$$
where the sum ranges over all infinite real sequences $x_1,x_2,x_3,\ldots\in [0,q-1]$ summing to $d$.
\end{lem}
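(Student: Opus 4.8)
\emph{Proof idea.} For an admissible sequence $x=(x_1,x_2,\ldots)$, meaning $x_i\in[0,q-1]$ and $\sum_i x_i=d$, write $P(x)=\prod_{i\ge 1}\frac{q-x_i}{q}$. Every factor lies in $[\tfrac1q,1]$ and $\sum_i \tfrac{x_i}{q}=\tfrac dq<\infty$, so the product converges to a positive real number and the minimum in the statement makes sense. The plan is to show that this minimum is attained at the ``canonical'' sequence $\xi$ having $m$ entries equal to $q-1$, one entry equal to $r$, and all remaining entries $0$: here $P(\xi)=(\tfrac1q)^m\cdot\tfrac{q-r}{q}=f_q(d)$. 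It then remains to prove $P(x)\ge f_q(d)$ for every admissible $x$.

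The whole argument rests on two elementary inequalities, each equivalent to the nonnegativity of an obvious product. If $a,b\ge 0$ with $a+b\le q-1$, then $(q-a)(q-b)\ge q(q-a-b)$, since the difference equals $ab$. If $a,b\in[0,q-1]$ with $a+b> q-1$, then, writing $s:=a+b-(q-1)\in(0,q-1)$, one has $(q-a)(q-b)\ge 1\cdot(q-s)$, because $(q-a)(q-b)-\bigl(1\cdot(q-s)\bigr)=\bigl((q-a)-1\bigr)\bigl((q-b)-1\bigr)\ge 0$. Translated into statements about $P$: the first inequality says that merging two entries $a,b$ with $a+b\le q-1$ into one entry $a+b$ (the vacated slot becoming $0$) does not increase $P$; applied to a far enough tail — whose sum is eventually $\le q-1$ — it reduces the problem to sequences with only finitely many nonzero entries. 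The second inequality says that replacing two entries $a,b$ both lying in the open interval $(0,q-1)$ by the pair $q-1$ and $a+b-(q-1)$ does not increase $P$.

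I would then iterate: while the (finitely many nonzero) sequence still has two entries in $(0,q-1)$, apply to such a pair whichever of the two moves is appropriate. Each move keeps all entries in $[0,q-1]$, preserves $\sum_i x_i=d$, does not increase $P$, and strictly decreases the number of entries lying in $(0,q-1)$; hence the process terminates. The terminal sequence has all entries in $\{0,q-1\}$ save possibly one, and a short case check using $0\le r\le q-2$ forces such a sequence with entry-sum $d=m(q-1)+r$ to be a permutation of $\xi$: the number of entries equal to $q-1$ must be exactly $m$, and the single remaining entry, if present, must be $r$. Therefore the terminal value of $P$ equals $f_q(d)$, and since $P$ never increased along the way, $P(x)\ge f_q(d)$. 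Combined with $P(\xi)=f_q(d)$, this yields $f_q(d)=\min_x P(x)$.

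The one genuinely delicate point — and the step I expect to demand the most care — is that the optimization runs over \emph{infinite} sequences. This forces me to (i) record at the outset that $P(x)$ is a well-defined positive number for every admissible $x$, (ii) justify collapsing the tail by applying the merging inequality to $\prod_{N<i\le M}\frac{q-x_i}{q}$ and letting $M\to\infty$, which is legitimate once $N$ is large enough that $\sum_{i>N}x_i\le q-1$, and (iii) note that after this reduction only finitely many moves are ever needed, since each move strictly lowers the count of entries in $(0,q-1)$. None of these obstructions is deep, but they are precisely what prevents the lemma from being a one-line consequence of concavity.
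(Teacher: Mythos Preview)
Your argument is correct and follows the same strategy as the paper's proof: reduce to finitely supported sequences by collapsing the tail, then repeatedly apply the two local moves (merge $a,b$ into $a+b$ when $a+b\le q-1$; replace $a,b$ by $q-1$ and $a+b-(q-1)$ when $a+b>q-1$) until at most one entry lies strictly between $0$ and $q-1$. The only difference is expository: where the paper invokes the concavity of $\log(q-x)$ to justify the merging step, you write out the equivalent elementary inequality $(q-a)(q-b)-q(q-a-b)=ab\ge 0$, and you are more explicit about convergence of the infinite product and termination of the iteration.
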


\begin{proof}
As $\log (q-x)$ is concave, the value of the product can only decrease if we replace
the sequence with
$$x_1,\ldots,x_{n-1},\sum_{i=n}^\infty x_i,0,0,\ldots$$
Thus, we may consider only sequences which are eventually zero.  If any two non-zero terms $x_i$ and $x_j$ satisfy $x_i+x_j\le q-1$, then we can decrease the product by replacing $x_i$ and $x_j$ by $x_i+x_j$ and $0$ respectively, so we may assume any two non-zero terms sum to more than $q-1$.
If $q-1 > x_i > x_j > 0$ and $x_i+x_j > q-1$, then we can decrease the product by replacing
$x_i$ and $x_j$ by $q-1$ and $x_i+x_j-(q-1)$ respectively.
Thus, we may assume that there is at most one $x_i$ which is neither $0$ nor $q-1$.
Without loss of generality, the sequence can be taken to be non-increasing, so the minimum is achieved
for
$$x_1=\cdots=x_m=q-1,\,x_{m+1}=r, \,x_{m+2} = \cdots = 0.$$

\end{proof}

\begin{lem}
For $1\le k\le q-1$ and $d\ge k$,
$$f_q(d) \le \frac{q-k}q f_q(d-k).$$
\end{lem}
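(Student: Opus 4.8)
The plan is to use the variational characterization of $f_q(d)$ from Lemma~\ref{estimate}. Write $d = m(q-1)+r$ with $0 \le r \le q-2$, and similarly write $d-k$ in the form $m'(q-1)+r'$. By Lemma~\ref{estimate}, $f_q(d)$ is the minimum of $\prod_{i\ge 1}\frac{q-x_i}{q}$ over all sequences $x_1,x_2,\ldots \in [0,q-1]$ summing to $d$, and likewise $f_q(d-k)$ is such a minimum over sequences summing to $d-k$. The idea is to take an optimal sequence $(y_1,y_2,\ldots)$ for $d-k$ — by the proof of Lemma~\ref{estimate} we may take it to be $y_1=\cdots=y_{m'}=q-1$, $y_{m'+1}=r'$, rest zero — and prepend the single term $k$. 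Since $1\le k\le q-1$, this gives a valid sequence $(k,y_1,y_2,\ldots)$ summing to $d$, and its product is exactly $\frac{q-k}{q}f_q(d-k)$. Since $f_q(d)$ is the minimum over all such sequences, we get $f_q(d)\le \frac{q-k}{q}f_q(d-k)$ immediately.

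First I would invoke Lemma~\ref{estimate} to reduce both sides to minima of products over summing sequences. Then I would exhibit the specific competitor sequence for $d$: namely, take any sequence $(x_i)$ achieving (or approaching) the minimum for $d-k$, and note that $(k, x_1, x_2, \ldots)$ is admissible for $d$ because $k\in[1,q-1]\subset[0,q-1]$ and the new sum is $k+(d-k)=d$. Finally I would compute that the product over this competitor is $\frac{q-k}{q}\cdot\prod_{i\ge 1}\frac{q-x_i}{q}$, and since the latter product equals $f_q(d-k)$ for an optimal choice of $(x_i)$ (existence of an optimizer is part of Lemma~\ref{estimate}, which states the value is a minimum, not an infimum), the claimed inequality follows.

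I do not anticipate a serious obstacle here; the only thing to be slightly careful about is that Lemma~\ref{estimate} is asserted with a minimum (so an optimal sequence for $d-k$ genuinely exists and we may prepend $k$ to it), rather than merely an infimum — but that is exactly what the lemma gives. One could also phrase the argument without referencing an explicit optimizer: for any $\epsilon>0$ pick a sequence for $d-k$ whose product is within $\epsilon$ of $f_q(d-k)$, prepend $k$, and conclude $f_q(d)\le \frac{q-k}{q}(f_q(d-k)+\epsilon)$ for all $\epsilon>0$. Either way the inequality is essentially a one-line consequence of the characterization already established.
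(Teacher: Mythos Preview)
Your proposal is correct and is exactly the paper's approach: the paper simply says the inequality follows immediately from the description of $f_q$ in Lemma~\ref{estimate}, which is precisely the competitor-sequence argument you spell out.
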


\begin{proof}
This follows immediately from the description of $f_q$ in Lemma~\ref{estimate}.
\end{proof}

\begin{lem}
For $q\ge 2$, we have $f_q(d) \ge 2^{-d}$.
\end{lem}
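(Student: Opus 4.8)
The plan is to derive the bound $f_q(d)\ge 2^{-d}$ directly from the explicit formula $f_q(d) = (q-r)/q^{m+1}$, where $d = m(q-1)+r$ with $0\le r\le q-2$. Equivalently, I want to show $(q-r)2^d \ge q^{m+1}$, i.e. $(q-r)2^{m(q-1)+r} \ge q^{m+1}$. The natural strategy is to separate the contribution of the ``full'' blocks (each accounting for a factor $2^{q-1}$ in the numerator of $(q-r)2^d$ and a factor $q$ in the denominator) from the contribution of the leftover term $r$. So it suffices to prove two elementary inequalities:
\begin{enumerate}
\item $2^{q-1}\ge q$ for every integer $q\ge 2$ (this handles each of the $m$ full blocks);
\item $(q-r)2^r \ge q$ for every integer $q\ge 2$ and every integer $r$ with $0\le r\le q-2$ (this handles the final block).
\end{enumerate}
Granting these, we get $(q-r)2^d = (q-r)2^r\cdot (2^{q-1})^m \ge q\cdot q^m = q^{m+1}$, which is exactly what we want.

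For (1), I would note that $2^{q-1}\ge q$ holds with equality at $q=2$ and can be proved by a one-line induction on $q$: if $2^{q-1}\ge q$ then $2^q = 2\cdot 2^{q-1}\ge 2q \ge q+1$. For (2), fix $q$ and regard $g(r) = (q-r)2^r$ as a function of $r\in\{0,1,\ldots,q-2\}$. At $r=0$ we have $g(0)=q$, so the claim is that $g$ does not drop below its value at $r=0$. One clean way: check that the ratio $g(r+1)/g(r) = 2(q-r-1)/(q-r)$ is $\ge 1$ exactly when $q-r-1\ge q-r-1$... more precisely $2(q-r-1)\ge q-r$ iff $q-r\ge 2$ iff $r\le q-2$, which holds throughout the relevant range except possibly at the top. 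So $g$ is nondecreasing on $\{0,1,\ldots,q-2\}$, hence $g(r)\ge g(0)=q$ for all such $r$. (Alternatively, one could invoke the previous lemma $f_q(d)\le \frac{q-k}{q}f_q(d-k)$ repeatedly, but the direct computation is shorter.)

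Combining, for $d = m(q-1)+r$ we obtain
$$f_q(d) = \frac{q-r}{q^{m+1}} = \frac{(q-r)2^r}{q}\cdot\frac{1}{q^m}\cdot\frac{2^{-r}}{1} \ge \frac{q}{q}\cdot\frac{1}{q^m}\cdot 2^{-r} = \frac{2^{-r}}{q^m} \ge \frac{2^{-r}}{2^{m(q-1)}} = 2^{-d},$$
where the last inequality uses $q\le 2^{q-1}$ from (1). I do not anticipate a genuine obstacle here; the only mild subtlety is making sure the edge case $r=q-1$ never arises (it does not, since by definition $0\le r\le q-2$, so $q-r\ge 2$ and all the inequalities above are safely strict or tight in the right direction), and checking the base case $q=2$ separately, where $d=m$, $r=0$, and $f_2(d) = 1/2^{m+1}$... wait, with $q=2$ we have $q-1=1$ so $d=m$, $r=0$, and $f_2(d)=(2-0)/2^{m+1}=2^{-m}=2^{-d}$, giving equality, consistent with the bound.
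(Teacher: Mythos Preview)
Your argument is correct: the two auxiliary inequalities $2^{q-1}\ge q$ and $(q-r)2^r\ge q$ (for $0\le r\le q-2$) are both valid, and combining them yields $(q-r)2^{m(q-1)+r}\ge q^{m+1}$, i.e.\ $f_q(d)\ge 2^{-d}$. The monotonicity computation for $g(r)=(q-r)2^r$ is fine (indeed $g(r+1)/g(r)=2(q-r-1)/(q-r)\ge 1$ for $r\le q-2$), and the final displayed chain of inequalities checks out. The self-corrections and the ``wait'' at the end are cosmetic; the case $q=2$ needs no special treatment since it falls under the general argument.

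The paper takes a shorter route: it simply observes that $f_q(0)=1$ and that the one-step ratio
\[
\frac{f_q(d+1)}{f_q(d)}=\frac{q-r-1}{q-r}\ge \frac12
\]
(with $d=m(q-1)+r$, $0\le r\le q-2$), whence $f_q(d)\ge 2^{-d}$ by induction on $d$. Your decomposition into ``full blocks'' and a ``leftover'' is essentially this same ratio bound iterated $q-1$ times across a block (giving $2^{q-1}\ge q$) and $r$ times in the tail (giving $(q-r)2^r\ge q$), so the two arguments are close cousins; the paper's version just avoids splitting into cases and is a bit more economical.
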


\begin{proof}
We have $f_q(0)=1$, and writing $d=m(q-1)+r$, with $0\le r\le q-2$, we have
$$\frac{f_q(d+1)}{f_q(d)} = \frac{q-r-1}{q-r} \ge \frac 12.$$
\end{proof}

\begin{thm}
\label{bounded degree}
Let $\F_q$ be a finite field and $P(x_1,\ldots,x_n) \in \F_q[x_1,\ldots,x_n]$ a polynomial of degree $d$.
If
$$N_P :=|\{(a_1,\ldots,a_n)\in \F_q^n\mid P(a_1,\ldots,a_n)\neq 0\}|$$
is non-zero, it satisfies
$$N_P \ge f_q(d)q^n \ge 2^{-d}q^n.$$
\end{thm}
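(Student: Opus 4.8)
The plan is to induct on $n$, the number of variables, reducing the bound to a one-variable statement that follows from Lemma~\ref{estimate} together with the elementary fact that a nonzero univariate polynomial of degree $e$ over $\F_q$ has at most $\min(e, q)$ roots, hence at least $q - \min(e,q) \geq q - e$ nonzero values when it is not identically zero. First I would dispose of the base case $n=1$: if $P(x_1)$ has degree $d$ and is nonzero, it has at least $q - \min(d,q-1) \cdot \tfrac{q}{q} $ — more carefully, at least $q - d$ nonzero values if $d \le q-1$, and at least $q-r$ nonzero values when $d = m(q-1)+r$ by a slightly more careful count of roots (a degree-$d$ polynomial over $\F_q$ vanishes on at most... ). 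Actually the cleanest base case is $n=1$ with $d \le q-1$, giving $N_P \ge q - d = \tfrac{q-d}{q}\cdot q \ge f_q(d) q$ since $f_q(d) = \tfrac{q-d}{q}$ when $m=0$; the case $d \ge q-1$ in one variable can be absorbed into the inductive mechanism or handled by observing a nonzero polynomial is nonzero somewhere.

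For the inductive step, write $P(x_1,\ldots,x_n) = \sum_{j=0}^{e} P_j(x_1,\ldots,x_{n-1}) x_n^j$ where $e \le d$ is the $x_n$-degree and $P_e \not\equiv 0$. The key idea is to stratify $\F_q^{n-1}$ by the largest index $j$ with $P_j(a_1,\ldots,a_{n-1}) \ne 0$. For a point $(a_1,\ldots,a_{n-1})$ where this largest index is $j_0 \ge 1$, the specialized polynomial $P(a_1,\ldots,a_{n-1},x_n)$ is a nonzero univariate polynomial of degree $j_0 \le e$, so it is nonzero for at least $q - j_0 \ge q - e$ values of $x_n$; for such points, since $\log(q-x)$ is concave, allocating "degree budget" $j_0$ to the last coordinate and the remaining budget $d - j_0$ to the first $n-1$ coordinates, the inductive hypothesis (applied to the relevant restriction) and Lemma~\ref{estimate} combine to give the needed product bound. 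The case $j_0 = 0$ means $P$ restricted to that fiber is the constant $P_0(a_1,\ldots,a_{n-1})$; here one uses induction on the lower-degree polynomial $P_0$, or more simply observes that this locus is itself cut out by the nonvanishing of $P_0$, whose degree is at most $d$, and on it $P$ is nonzero for all $q$ values of $x_n$.

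The cleanest way to package this is: let $D$ be the $x_n$-degree of $P$ and let $R(x_1, \ldots, x_{n-1})$ be the coefficient of $x_n^D$; by induction on $n$, $R$ is nonzero on a set $S$ of size at least $f_q(d-D) q^{n-1} $ (using $\deg R \le d - D$). For each $(a_1, \ldots, a_{n-1}) \in S$, the univariate polynomial $P(a_1, \ldots, a_{n-1}, x_n)$ has degree exactly $D$, hence at least $q - D$ nonzero values (when $D \le q-1$; the general case requires the sharper root count $q - D'$ where one reduces $D$ modulo the relations, matching exactly the $f_q$ formula). This yields $N_P \ge (q-D) f_q(d-D) q^{n-1}$, and by the second lemma ($f_q(d) \le \tfrac{q-k}{q} f_q(d-k)$ applied with $k = D$, handling $D \ge q-1$ by splitting off chunks of size $q-1$), this is at least $f_q(d) q^n$. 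The final inequality $f_q(d) q^n \ge 2^{-d} q^n$ is exactly the third lemma. The main obstacle I anticipate is the bookkeeping when the $x_n$-degree $D$ exceeds $q-1$: a univariate polynomial of degree $D \ge q$ over $\F_q$ can still have as few roots as forced by $x^q - x$ relations, so one must either reduce modulo $x^q - x$ to bring the effective degree below $q$ and track how this interacts with the multiplicative structure of $f_q$, or set up the induction directly in terms of the $f_q$ recursion so that the $q-1$ chunks are peeled off uniformly in both the "last variable" and "remaining variables" allocations — this is precisely what Lemma~\ref{estimate}'s optimal sequence $x_1 = \cdots = x_m = q-1, x_{m+1} = r$ is designed to accommodate.
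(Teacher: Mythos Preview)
Your approach is correct and essentially identical to the paper's: expand in $x_n$, apply induction to the leading coefficient $P_l$ (of degree $\le d-l$), and combine the bound $N_P \ge (q-l)N_{P_l}$ with the inequality $f_q(d) \le \frac{q-l}{q} f_q(d-l)$. The paper resolves the bookkeeping obstacle you anticipate by reducing $P$ modulo $(x_1^q - x_1,\ldots,x_n^q - x_n)$ at the very outset---this can only decrease the total degree, and since $f_q$ is monotone decreasing this is harmless---so that the $x_n$-degree $l$ automatically satisfies $1\le l\le q-1$; the paper then inducts on $d$ rather than on $n$, but that difference is cosmetic.
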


\begin{proof}

As a function on $\F_q^n$, $P$ depends only on its residue class modulo $(x_1^q-x_1,\ldots,x_n^q-x_n)$.  Each such residue class contains a unique element which is of degree $<q$ in each variable separately, and the (total) degree of this representative achieves the minimal degree of all polynomials in the residue class of $P$.
As $f_q(d)$ decreases monotonically in $d$,
we may assume that $P$ is of degree less than $q$ in each variable separately.

We use induction on $d$, the base case $d=0$ being trivial.  Without loss of generality, we may assume that, as a function on $\F_q^n$, $P$ is not constant in the variable $x_n$.  Since the $x_n$-degree of $P$ is less than $q$,
this means that the $x_n$-degree is $l \in [1,q-1]$.  We write
$$P(x_1,\ldots,x_n) = \sum_{i=0}^l P_i(x_1,\ldots,x_{n-1}) x_n^i.$$

As $l +\deg P_l = \deg x_n^l P_l \le \deg P$, we have $\deg P_l \le d-l$.
If $(a_1,\ldots,a_{n-1})\in \F_q^{n-1}$ satisfies $P_l(a_1,\ldots,a_{l-1})\neq 0$, there are at least
$q-l$ solutions of $P(a_1,\ldots,a_{n-1},x_n)\neq 0$.  By the induction hypothesis,
$$N_P\ge (q-l) N_{P_l} \ge
(q-l)f_q(d-l)q^{n-1}=\frac{q-l}q f_q(d-l) q^n\ge f_q(d)q^n.$$

\end{proof}

We can now prove Theorem \ref{dixon}.

\begin{proof}

We have to show that, if the evaluation map $e_Q$ associated with $Q$ is not identically $0$, then $\frac{|e_Q^{-1}(0)|}{|A|^n}< 1 - 2^{-d}$.

If $e_Q$ does not vanish on $A$, then there exists a linear functional $f\colon A\to \F_q$ such that $f\circ e_Q$ does not vanish on $A^n$.  If $Q$ is of degree $d$, then $f\circ e_Q$ has degree $\le d$.  Theorem \ref{dixon} now follows from Theorem~\ref{bounded degree}.
\end{proof}

Let $n$ be a positive integer and consider
$\F_q\langle x_1,\ldots,x_n\rangle$, the algebra over $\F_q$ of the free magma on $n$ generators.  This is a graded $\F_q$-algebra.  Let $Q\in \F_q\langle x_1,\ldots,x_n\rangle$ denote a non-zero element of degree $d$.  If $I$ is any ideal of $A$ of finite codimension, then $Q$ induces a map $(A/I)^n\to A/I$, which we denote $Q_I$ .
Let
$$f(Q,I) := \frac{|Q_I^{-1}(0)|}{|A/I|^n}.$$
Regarding $A/I$ as a finite-dimensional vector space, $Q_I$ is given by a polynomial of degree $\le d$, so either it maps $(A/I)^n$ to $0$, or
$$f(Q,I) \le 1-2^{-d}.$$

We now prove Theorem~\ref{finite fields}.

\begin{proof}
We first prove that condition (1) implies condition (2). We have to show that if $Q$ is not a coset identity
of $A$ then for all $\epsilon > 0$, there exists an ideal $I$ of finite codimension such that $f(Q,I)\le \epsilon$.
We first prove that it implies that for any ideal $I$ of finite codimension, there exists an ideal $J\subset I$ of finite codimension such that
$$f(Q,J) \le (1-2^{-d}) f(Q,I).$$
For each element $\alpha\in Q_I^{-1}(0)$, we choose a representative
$\tilde\alpha = (a_1,\ldots,a_n) \in A^n$ such that $e_Q(a_1,\ldots,a_n)\neq 0$ and an ideal of finite codimension $I_\alpha$ to which
$e_Q(a_1,\ldots,a_n)\neq 0$ does not belong.  Let
$$J = I \cap \bigcap_\alpha I_\alpha,$$
which, by construction, is of finite codimension.
Again by construction, $Q_J$ does not map any $n$-tuple of cosets of $I/J$ to $0$.  Therefore, for each such $n$-tuple, the number of elements mapping to $0$ by $Q_J$ is at most $(1-2^{-d})|I/J|^n$.
If the coset maps to an element of $(A/I)^n$ which is not in $Q_I^{-1}(0)$, then no element of that coset maps to $0$ by $Q_J$.  This proves the claim, and the equivalence of conditions (1) and (2) follows immediately.

Now, suppose $Q$ is multilinear. We will show that condition (2) implies condition (3) with the same ideal $I$.
Assuming (2) we have
\[
e_Q(a_1 + y_1, \ldots , a_n + y_n)=0
\]
for all $y_1, \ldots , y_n \in I$.
By the multilinearity of $Q$ we have, for all $y_1 \in I$,
\begin{align*}
0 = e_Q(a_1 + y_1, a_2, \ldots , a_n) &= e_Q(a_1, a_2, \ldots , a_n) + e_Q(y_1, a_2, \ldots , a_n) \\
&= e_Q(y_1, a_2, \ldots , a_n).
\end{align*}
Similarly we have, for all $y_2 \in I$,
\[
0 = e_Q(y_1, a_2 + y_2, a_3, \ldots , a_n) = e_Q(y_1, a_2, a_3, \ldots , a_n) + e_Q(y_1, y_2, a_3, \ldots , a_n).
\]
Proceeding in this way we obtain
\[
e_Q(y_1, \ldots , y_n)= 0
\]
for all $y_1, \ldots , y_n \in I$.

This completes the proof.

\end{proof}

\end{document}